\date{\today}
\newcommand{\Z}{{\mathbb Z}}
\newcommand{\R}{{\mathbb R}}
\newcommand{\N}{{\mathbb N}}
\newcommand{\GL}{{\mathrm{GL}}}
\newcommand{\SL}{{\mathrm{SL}}}
\newtheorem{theorem}{Theorem}[section]
\newtheorem{lemma}[theorem]{Lemma}
\theoremstyle{definition}
\newtheorem*{definition}{Definition}
\newtheorem*{question}{Question}
\newtheorem{remark}[theorem]{Remark}
\theoremstyle{plain}
\allowdisplaybreaks \numberwithin{equation}{section}
\DeclareMathOperator{\supp}{supp}
\begin{document}

\title[Dense Phenomena for Ergodic Schr\"odinger Operators]{Dense Phenomena for Ergodic Schr\"odinger Operators: I. Spectrum, Integrated Density of States, and Lyapunov Exponent}

\author[A.\ Avila]{Artur Avila}

\address{Institut f\"ur Mathematik, Universit\"at Z\"urich, Winterthurerstrasse 190, 8057 Z\"urich, Switzerland and IMPA, Estrada D. Castorina 110, Jardim Bot\^anico, 22460-320 Rio de Janeiro, Brazil}

\email{artur.avila@math.uzh.ch}

\author[D.\ Damanik]{David Damanik}

\address{Department of Mathematics, Rice University, Houston, TX~77005, USA}

\email{damanik@rice.edu}

\thanks{D.\ D.\ was supported in part by NSF grants DMS--2054752 and DMS--2349919}

\keywords{ergodic Schr\"odinger operators, spectrum, integrated density of states, Lyapunov exponent, non-uniform cocycles}

\begin{abstract}
We consider Schr\"odinger operators in $\ell^2(\Z)$ whose potentials are defined via continuous sampling along the orbits of a homeomorphism on a compact metric space. We show that for each non-atomic ergodic measure $\mu$, there is a dense set of sampling functions such that the associated almost sure spectrum has finitely many gaps, the integrated density of states is smooth, and the Lyapunov exponent is smooth and positive. As a byproduct we answer a question of Walters about the existence of non-uniform $\SL(2,\R)$ cocycles in the affirmative.
\end{abstract}

\maketitle

\section{Introduction}

\subsection{Context}

The Schr\"odinger equation $i \partial_t \psi = H \psi$ describes the time evolution of a quantum mechanical system. Here, $H$ denotes the Schr\"odinger operator, which takes the
form $H = -\Delta + V$ and acts as a self-adjoint operator in the underlying Hilbert space, which is usually given by $L^2(\R^d)$ or $\ell^2(\Z^d)$. The potential $V$ is a real-valued function defined on $\R^d$ or $\Z^d$ and it models the environment the quantum state is evolving in.

The time evolution given by the Schr\"odinger equation is often studied via time-independent methods by relating it to spectral properties of the Schr\"odinger operator. Invoking the spectral theorem for self-adjoint operators, the type of the spectral measure of the initial state $\psi(0)$ of the evolution gives information about the behavior of $\psi(t) = e^{-itH} \psi(0)$ as $|t| \to \infty$. Very roughly speaking, the more regular this spectral measure is, the faster the state will spread out in space. The classical correspondence in this spirit is given by the RAGE theorem, but more refined results are available by now. As a consequence, one is generally interested in the decomposition of spectral measures into their absolutely continuous, singular continuous, and pure point parts, and a determination of which parts are non-trivial.
 
The investigation of Schr\"odinger operators with potentials displaying a spatially stationary structure has been an area of intense activity in the past several decades, with significant advances leading to an in-depth understanding of their spectral properties. Specific examples of potentials with a spatially stationary structure are given by periodic potentials, almost periodic potentials, and random potentials.

The theory has been developed in greater detail in the case of one spatial dimension (i.e., when considering the Schr\"odinger evolution in $L^2(\R^d)$ or $\ell^2(\Z^d)$ with $d = 1$). This is especially true in the almost periodic case, but also in the random case our understanding is far more complete in one dimension. 

The class of ergodic Schr\"odinger operators provides a convenient umbrella framework for the investigation of Schr\"odinger operators with potentials displaying a spatially stationary structure. In particular, one can in this way explain several common features of these operator families, such as the existence of a non-random spectrum and the relevance of the integrated density of states and (in the case of dimension one) the Lyapunov exponent in the spectral analysis of these operators. 

The general theory of ergodic Schr\"odinger operators (in $\ell^2(\Z)$) has been surveyed in \cite{D17, J07} and presented in detail in \cite{DF22b, DF24}. In addition, there are surveys devoted to the periodic case \cite{K16}, the limit-periodic case \cite{DF20}, the quasi-periodic case \cite{MJ17}, and the random case \cite{K08, S11}.

In addition to providing an umbrella framework in which to study certain subclasses of interest, the class of ergodic Schr\"odinger operators can be studied in its own right. There are several general results, such as Kotani theory \cite{D07, K84, K97, S83}, gap labelling theory \cite{ABD12, B85, DF22a, J86}, and generic spectral singularity \cite{AD05} and continuity \cite{AD25}.

The present paper is written in this spirit and extends the general theory of ergodic Schr\"odinger operators. It shows under minimal assumptions on the base dynamics that various spectral properties arise for a dense set of sampling functions.

To be able to describe these spectral properties in the next section, let us recall some fundamental quantities and results from the general theory of ergodic Schr\"odinger operators. For background and more details, we refer the reader to \cite{D17, DF22b, DF24}.

Given a compact metric space $\Omega$, a homeomorphism $T : \Omega \to \Omega$, a $T$-ergodic Borel probability measure $\mu$, and a continuous sampling function $v : \Omega \to \R$, we can define potentials $V_\omega(n) = v(T^n \omega)$, $\omega \in \Omega$, $n \in \Z$ and Schr\"odinger operators $[H_\omega \psi](n) = \psi(n+1) + \psi(n-1) + V_\omega(n) \psi (n)$ in $\ell^2(\Z)$. The family $\{ H_\omega \}_{\omega \in \Omega}$ is then referred to as an \emph{ergodic family of Schr\"odinger operators} (in $\ell^2(\Z)$). 

First, there is a non-random spectrum $\Sigma_v$ such that $\sigma(H_\omega) = \Sigma_v$ for $\mu$-almost every $\omega \in \Omega$.\footnote{Of course, the non-random spectrum depends on the base dynamics $(\Omega,T,\mu)$ as well, but in this paper the latter will be fixed, while the sampling function $v$ is being varied. We therefore usually make the dependence on $v$ explicit.}

Second, if one defines the \emph{density of states measure} $\nu_v$ on $\R$ as the $\mu$-average of the spectral measure corresponding to the pair $(H_\omega,\delta_0)$, that is,
$\int g \, d\nu_v = \int \langle \delta_0, g(H_\omega) \delta_0 \rangle \, d\mu(\omega)$, the almost sure spectrum coincides with the topological support of the density of states measure,
\begin{equation}\label{e.dossupport}
\Sigma = \supp \, \nu_v.
\end{equation}
The accumulation function of the density of states measure,
$N_v(E) = \int \chi_{(-\infty,E]} \, d\nu_v$, is called the \emph{integrated density of states}.

Third, the Lyapunov exponent measures the average exponential rate of growth of transfer matrices. That is, given $E \in \R$, define $A_E : \Omega \to \SL(2,\R), \; \omega \mapsto \begin{pmatrix} E - v(\omega) & -1 \\ 1 & 0 \end{pmatrix}$
and, for $n \ge 1$, $A_E^n : \Omega \to \SL(2,\R), \; \omega \mapsto A_E(T^{n-1} \omega) \cdots A_E(\omega)$. The \emph{Lyapunov exponent} is then given by $L_v(E) = \lim_{n \to \infty} \frac{1}{n} \int \log \| A_E^n(\omega) \| \, d\mu(\omega)$, and it is connected to the density of states measure via the Thouless formula
\begin{equation}\label{e.thouless}
L_v(E) = \int \log | E' - E | \, d\nu_v(E').
\end{equation}

\subsection{The Main Spectral Results}

The following theorem contains the main results of this paper.

\begin{theorem}\label{t.main2}
Given a compact metric space $\Omega$, a homeomorphism $T : \Omega \to \Omega$, and a non-atomic ergodic Borel probability measure $\mu$, there is a dense set of $v \in C(\Omega,\R)$ such that the following statements hold:

\begin{itemize}

\item[{\rm (a)}] the almost sure spectrum $\Sigma_v$ consists of a finite union of non-degenerate compact intervals;

\item[{\rm (b)}] the integrated density of states $N_v$ is $C^\infty$;

\item[{\rm (c)}] the Lyapunov exponent is $C^\infty$ and obeys $\min \{ L_v(E) : E \in \R \} > 0$.

\end{itemize}
\end{theorem}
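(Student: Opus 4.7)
My plan is, given any $v \in C(\Omega, \R)$ and $\varepsilon > 0$, to construct $\tilde v$ with $\|v - \tilde v\|_\infty < \varepsilon$ satisfying (a), (b), (c). The non-atomicity of $\mu$ is exploited via Rokhlin's lemma: for any large $N$ and small $\delta > 0$, there is a measurable set $F$ with $F, TF, \ldots, T^{N-1}F$ pairwise disjoint and $\mu\bigl(\bigcup_{k=0}^{N-1} T^k F\bigr) > 1 - \delta$, and a Tietze-type extension turns this into continuous sampling functions with prescribed block structure along $\mu$-typical orbits.

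The construction proceeds in two substeps. First, I approximate $v$ by a $v_0 \in C(\Omega, \R)$ whose orbit samples are essentially $N$-periodic on the tower, with period parameters $c_0, \ldots, c_{N-1}$ chosen so that the corresponding $N$-periodic Schr\"odinger operator has spectrum equal to a union of $N$ non-degenerate bands (which holds generically in the period parameters). This yields (a) at the level of the model via classical Floquet theory. However, the IDS of a periodic model has an inverse-square-root singularity at each band edge, so (b) fails, and the Lyapunov exponent vanishes identically on the spectrum, so (c) fails as well. Second, I perturb $v_0$ by a small $w \in C(\Omega,\R)$ designed so that the one-period block monodromies attached to different occurrences of the tower generate a strongly irreducible and unbounded subgroup of $\SL(2,\R)$. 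By Furstenberg's theorem this forces $L_{\tilde v}(E) > 0$ uniformly in $E \in \R$; strict positivity on a neighborhood of $\Sigma_{\tilde v}$ smooths the band-edge singularity of $\nu_{\tilde v}$ into a $C^\infty$ density, and via the Thouless formula \eqref{e.thouless}, smoothness of $N_{\tilde v}$ is equivalent to smoothness of $L_{\tilde v}$ off $\Sigma_{\tilde v}$. Analyticity of $A_E$ in $E$ then bootstraps $C^\infty$ regularity across the spectrum.

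The main obstacle is the simultaneous achievement of (a), (b), (c), which pull in mutually antagonistic directions: periodic potentials give band structure but $L \equiv 0$ on the spectrum with non-smooth IDS at band edges, while Anderson-type potentials force $L > 0$ but typically destroy band structure and give only H\"older-regular IDS. The perturbation $w$ must therefore be small enough in $C^0$ (so that Hausdorff stability of $\Sigma$ keeps the gap structure intact, and no additional gaps open) yet nontrivial in an irreducibility sense sufficient to push $L_{\tilde v}$ uniformly away from zero. This is precisely the construction of a non-uniformly hyperbolic $\SL(2,\R)$ cocycle --- inside the finite-gap spectrum $\Sigma_{\tilde v}$, the cocycle $A_E$ has $L > 0$ but fails to be uniformly hyperbolic, since for Schr\"odinger cocycles UH corresponds to the resolvent set --- which gives the affirmative answer to Walters' question as a byproduct.
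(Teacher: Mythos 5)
Your plan breaks down at the crucial step where positivity and smoothness are supposed to appear. The appeal to Furstenberg's theorem is not available here: Furstenberg concerns products of \emph{independent, identically distributed} random matrices, whereas in this setting the base $(\Omega,T,\mu)$ is an arbitrary deterministic ergodic system (it could be an irrational rotation, or any uniquely ergodic system), and the block monodromies produced by your perturbation $w$ occur along orbits in a deterministic order dictated by $T$. Arranging that these blocks generate a strongly irreducible, unbounded subgroup of $\SL(2,\R)$ gives no lower bound on $L_{\tilde v}$; indeed, small quasi-periodic potentials routinely have $L \equiv 0$ on the spectrum. The actual proof has to manufacture the effect of randomness inside the sampling function itself: starting from \cite{AD05} (Lebesgue-a.e.\ positivity of $L$), one convolves the density of states with a smooth kernel --- which by the Thouless formula \eqref{e.thouless} also convolves $L$ and hence makes the \emph{smoothed} Lyapunov exponent uniformly positive --- realizes this convolution as the density of states of a family obtained by shifting the potential by constants distributed according to the kernel, and then transfers that family back to $\Omega$ by assigning the shift values to a finely packed Kakutani--Rokhlin tower base with the correct relative frequencies; this is iterated with quantitative $C^\infty$ control so that the limit $v$ inherits (a)--(c). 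Your use of Rokhlin towers only to produce periodic blocks does not capture this mechanism.

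Two further claims in your argument are false as stated. First, uniform positivity of $L$ does not smooth the IDS: in general one only gets (log-)H\"older regularity, and smoothness in the Anderson model requires regularity of the single-site distribution (cf.\ \cite{ST85}); the Bernoulli Anderson model has an analytic cocycle, positive $L$, and a non-smooth IDS, so neither the Thouless formula (which in any case makes $L$ real-analytic off $\Sigma$ automatically, saying nothing about $N$ on $\Sigma$) nor ``analyticity in $E$'' can bootstrap $C^\infty$ regularity. Second, the assertion that a $C^0$-small perturbation ``keeps the gap structure intact, and no additional gaps open'' contradicts gap labelling: arbitrarily small perturbations can open many new gaps, which is precisely why property (a) cannot hold generically (Remark~\ref{rem.1}(v), \cite{ABD12}). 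Preserving the finite-band structure requires a quantitative safeguard, which the paper obtains by forcing every connected component of the support of the smoothed density of states to have length at least $2\varepsilon^{(0)}$ at every stage of the induction, so that the limit measure \eqref{e.dossupport} has finitely many support components of definite length.
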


\begin{remark}\label{rem.1}
(i) The case of an atomic ergodic Borel probability measure $\mu$ is easily understood. In this case, $\supp \mu$ is finite and indeed given by a single $T$-periodic orbit due to ergodicity. This immediately shows that the associated operators (with $\omega$ restricted to $\supp \mu$) are given by a single periodic Schr\"odinger operator and its translates, so that the standard periodic theory applies. As a consequence, (a) continues to hold; (b) holds in a slightly modified formulation (smoothness of $N_v$ holds away from the boundary of $\Sigma_v$, but not globally); and (c) fails completely (${L_v} \equiv 0$ on $\Sigma_v$).
\\[2mm]
(ii) This theorem can be viewed as follows. Excluding the trivial case of an atomic $\mu$ (which is discussed in item (i) above), we have in complete generality that basic spectral properties known for the Anderson model (with a single-site distribution that has a sufficiently regular density; c.f.~\cite{DF24, KS80, ST85}) hold on a dense set of sampling functions, even if the base dynamics does not display any randomness whatsoever. All the features responsible for such spectral behavior can be encoded in the sampling function, and this can indeed be arranged in any neighborhood of any given sampling function.
\\[2mm]
(iii) Another basic spectral property of the Anderson model is localization, that is, for $\mu$-almost every $\omega \in \Omega$, the operator $H_\omega$ has pure point spectrum with exponentially decaying eigenfunctions. In fact, the statement in part (c), namely the uniform positivity of the Lyapunov exponent, is usually a strong indication of localization and serves as a starting point of a rigorous proof of this spectral feature.\footnote{It should be pointed out, however, that there are Schr\"odinger operators with uniformly positive Lyapunov exponent and continuous spectrum. As far as general consequences go, positive Lyapunov exponents do imply the absence of absolutely continuous spectrum. See \cite{DF22b, DF24} and references therein for more details.} We will devote a forthcoming joint paper with Fillman to proving localization for a dense set of sampling functions with base dynamics $(\Omega,T,\mu)$ as in Theorem~\ref{t.main2} above.
\\[2mm]
(iv) It should be emphasized that none of the statements in (a)--(c) were previously known for a dense set of sampling functions for \emph{any} ergodic base dynamics $(\Omega,T,\mu)$ with $\mu$ non-atomic. In particular, we do not broaden the scope of a known result, but rather exhibit new phenomena, and establish them in the greatest generality possible.
\\[2mm]
(v) One is often interested in spectral properties that hold for a dense $G_\delta$ set of sampling functions. Our theorems on generic singularity \cite{AD05} and continuity \cite{AD25} are of this kind. One can wonder whether the spectral properties in (a)--(c) might be generic in this sense. This, however, is known to be false (e.g., part (a) cannot hold generically due to the complete gap labeling result from \cite{ABD12} as soon as the Schwartzman group is dense in $(0,1)$) or unlikely to be true. In fact, the main point of Theorem~\ref{t.main2} is that non-generic spectral properties may still hold for a dense set of sampling functions.
\end{remark}

\subsection{An Answer to a Question of Walters}

One of the central pillars of ergodic theory \cite{P89, W82} is the Birkhoff ergodic theorem, which states that 
$\lim_{N \to \infty} \frac{1}{N} \, \sum_{n = 0}^{N-1} f (T^n \omega) = \int f \, d\mu$ for $f \in L^1(\Omega,\mu)$ and $\mu$-almost every $\omega \in \Omega$. Here, $(\Omega,\mu)$ is a probability measure space and $T:\Omega\to\Omega$ is ergodic. 

On the other hand, the Kingman subadditive ergodic theorem states that if $f_n : \Omega \to \R$ are integrable and obey $f_{n+m}(\omega) \le f_n(\omega) + f_m(T^n \omega)$, then there exists a constant $f \in [-\infty,\infty)$ such that
\begin{equation}\label{e.kingman}
\lim_{n \to \infty} \frac{1}{n} f_n(\omega) = f
\end{equation}
for $\mu$-almost every $\omega \in \Omega$.

An important application of Kingman's subadditive ergodic theorem yields the existence of Lyapunov exponents for $\mathrm{GL}(N,\R)$-cocycles. Given $A : \Omega \to \mathrm{GL}(N,\R)$ measurable with $\log \|A(\cdot)\|$ integrable, there is a \emph{Lyapunov exponent} $L(A)$ such that for
$A^n(\omega) = A(T^{n-1} \omega) \cdots A(\omega)$, we have
\begin{equation}\label{e.lyapunov}
L(A) = \lim_{n \to \infty} \frac{1}{n} \log \| A^n(\omega) \|
\end{equation}
$\mu$-almost everywhere. 

One may wonder whether almost everywhere convergence may be improved to everywhere uniform convergence. In the additive setting, this question is connected to the concept of unique ergodicity \cite{O52}. If one wants to explore uniform convergence in the subadditive setting, one clearly has to at least assume unique ergodicity. But is unique ergodicity sufficient?

One has to distinguish between the ``trivial case'' of a finitely supported unique ergodic measure and the ``non-trivial case'' of a non-atomic unique ergodic measure. In the former case, uniform convergence is easy to establish. In the latter case, the general subadditive setting is understood: Derriennic and Krengel \cite{DK81} have shown that there are always continuous $f_n$ so that $\{ f_n \}$ is subadditive and uniform convergence fails in \eqref{e.kingman}.

This leaves the question about uniform convergence in \eqref{e.lyapunov} for $\GL(N,\R)$-cocycles over a uniquely ergodic $(\Omega,T)$ with non-atomic unique ergodic measure $\mu$. Clearly, the case $N = 1$ reduces to the additive setting, and failure for one $N$ implies failure for all larger values of $N$. Walters thus asked \cite{W84}:

\begin{question}[Walters]
If $(\Omega,T)$ is uniquely ergodic with a non-atomic unique ergodic Borel probability measure, does there exist a continuous $A : \Omega \to \mathrm{GL}(2,\R)$ for which $\frac{1}{n} \log \|A^n(\omega)\|$ fails to converge uniformly to a constant?
\end{question}

We are able to answer this question in the affirmative:

\begin{theorem}\label{c.walters}
Given a compact metric space $\Omega$, a homeomorphism $T : \Omega \to \Omega$, and a non-atomic ergodic Borel probability measure $\mu$, there exists a non-uniform $\SL(2,\R)$ cocycle. That is, there is a continuous $A : \Omega \to \SL(2,\R)$ for which the matrix products $A_n(\omega) = A(T^{n-1} \omega) \cdots A(\omega)$ are such that $\frac1n \log \|A_n(\cdot)\|$ fails to converge uniformly to the associated Lyapunov exponent $L(A) = \lim_{n \to \infty} \frac1n \int \log \|A_n(\omega)\| \, d\mu(\omega)$.
\end{theorem}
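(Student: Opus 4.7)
The plan is to derive Theorem~\ref{c.walters} directly from Theorem~\ref{t.main2} by producing a continuous $\SL(2,\R)$-cocycle that fails to be uniformly hyperbolic despite having a strictly positive Lyapunov exponent. Apply Theorem~\ref{t.main2} to the given triple $(\Omega, T, \mu)$ to obtain a sampling function $v \in C(\Omega, \R)$ such that $\Sigma_v$ is a non-empty finite union of non-degenerate compact intervals and $L_v(E) > 0$ for every $E \in \R$. Fix any $E_0 \in \Sigma_v$ and define the associated Schr\"odinger cocycle $A \in C(\Omega, \SL(2,\R))$ by
\begin{equation*}
A(\omega) = \begin{pmatrix} E_0 - v(\omega) & -1 \\ 1 & 0 \end{pmatrix}.
\end{equation*}
By part~(c) of Theorem~\ref{t.main2}, its Lyapunov exponent satisfies $L(A) = L_v(E_0) > 0$.

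Suppose for contradiction that $\frac{1}{n} \log \|A_n(\omega)\|$ converges to $L(A)$ uniformly in $\omega \in \Omega$. Since $L(A) > 0$, taking $\varepsilon = L(A)/2$ in the definition of uniform convergence yields an $N \ge 1$ such that $\|A_n(\omega)\| \ge e^{nL(A)/2}$ for every $\omega \in \Omega$ and every $n \ge N$; in other words, the norms of the iterates grow exponentially in a uniform manner. For continuous $\SL(2,\R)$-cocycles over a compact metric space, this property is well known to be equivalent to uniform hyperbolicity: taking limits in $\RP$ of the most-contracted directions of $A_n(\omega)$ as $n \to \infty$ produces a continuous $T$-invariant splitting $\R^2 = E^s(\omega) \oplus E^u(\omega)$ with uniform exponential contraction on $E^s$ and expansion on $E^u$.

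Finally, a standard application of Johnson's theorem---or, equivalently, the direct construction of the Green's function from the exponential dichotomy---shows that uniform hyperbolicity of the Schr\"odinger cocycle $A_{E_0}$ on all of $\Omega$ forces $E_0$ to lie in the resolvent set of $H_\omega$ for every $\omega \in \Omega$. Since $\sigma(H_\omega) = \Sigma_v$ for $\mu$-almost every $\omega$, this yields $E_0 \notin \Sigma_v$, contradicting the choice of $E_0$. The main obstacle is not any single technical estimate but the careful invocation of the classical dictionary between uniform convergence to a positive Lyapunov exponent, uniform exponential growth of the norm, uniform hyperbolicity, and the resolvent set of the associated Schr\"odinger operator; the genuinely new input is Theorem~\ref{t.main2}, which supplies a potential whose almost sure spectrum is a union of intervals on which the Lyapunov exponent is strictly positive.
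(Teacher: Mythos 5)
Your proposal is correct and follows essentially the same route as the paper: choose $v$ from the dense set of Theorem~\ref{t.main2}, take an energy $E_0 \in \Sigma_v$, and conclude non-uniformity of the Schr\"odinger cocycle because uniform convergence to the positive Lyapunov exponent would force uniform hyperbolicity and hence, by the Johnson-type dictionary, $E_0 \notin \Sigma_v$. The only difference is presentational: the paper cites Johnson's theorem (with Furman/Lenz as related references) in one line, while you spell out the intermediate steps (uniform norm growth $\Rightarrow$ uniform hyperbolicity $\Rightarrow$ resolvent set), which are indeed the standard facts being invoked.
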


\section{Proofs}

\begin{definition}\label{def.kernel}
Fix once and for all a $C^\infty$ function $s : \R \to [0,\infty)$ with $\supp s = [-1,1]$ and $\int s(x) \, dx = 1$. For $\varepsilon > 0$, consider the rescaled function $s_\varepsilon (x) = \varepsilon^{-1} s(\varepsilon^{-1}x)$ with $\supp s_\varepsilon = [-\varepsilon,\varepsilon]$ and $\int s_\varepsilon (x) \, dx = 1$. Given a Borel probability measure $\nu$ on $\R$, its convolution with $s_\varepsilon$ is denoted by $S_\varepsilon \nu$:
$$
(S_\varepsilon \nu)(g) = \iint g(x+y) s_\varepsilon(x) \, dx \, d\nu(y).
$$
\end{definition}

\begin{remark}\label{r.smoothing}
We say that $\nu$ is $C^\infty$ if it is absolutely continuous with respect to the Lebesgue measure on $\R$ and its Radon-Nikodym derivative is $C^\infty$. It is easy to see that $S_\varepsilon \nu$ is $C^\infty$ for every $\varepsilon > 0$. 
Specifically, $S_\varepsilon \nu$ is absolutely continuous with respect to Lebesgue measure with Radon-Nikodym derivative given by $\int s_\varepsilon(x-y) \, d\nu(y)$, which is smooth. 
\end{remark}

\begin{lemma}\label{l.smoothinglemma}
Suppose $\nu_n, \nu$ are probability measures on $\R$, all supported within a common compact set, with $\nu_n \to \nu$ in the weak-$*$ sense. 

{\rm (a)} For every $\varepsilon > 0$, we have $S_\varepsilon \nu_n \to S_\varepsilon \nu$ in $C^\infty$.

{\rm (b)} If $\nu$ is $C^\infty$, then $S_{\varepsilon} \nu \to \nu$
 in $C^\infty$ as $\varepsilon \to 0$.

{\rm (c)} If $\nu$ is $C^\infty$, then there exists a sequence $\varepsilon_n \to 0$ such that 
$$
\lim_{n \to \infty} \sup_{n' \ge n} \mathrm{dist}_{C^\infty} ( S_{\varepsilon_n} \nu_{n'}, \nu ) = 0.
$$
\end{lemma}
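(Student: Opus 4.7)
For part (a), I would begin with the density formula noted in Remark \ref{r.smoothing}: write $f_n(x) \defeq \int s_\varepsilon(x-y)\,d\nu_n(y)$ and analogously for $f$. Since $s_\varepsilon \in C_c^\infty(\R)$, differentiation under the integral is justified to all orders, giving $f_n^{(k)}(x) = \int s_\varepsilon^{(k)}(x-y)\,d\nu_n(y)$. Weak-$*$ convergence immediately yields pointwise convergence $f_n^{(k)}(x) \to f^{(k)}(x)$ for every $x$. The main obstacle is upgrading this to uniform convergence in $x$, since weak-$*$ convergence alone does not automatically give uniform control across a parameterized family of test functions. To overcome this, I would invoke the standard enhancement that if $\mu_n \to \mu$ weak-$*$ on a compact set $K$ and $\{g_\alpha\}$ is uniformly bounded and equicontinuous on $K$, then $\int g_\alpha\,d\mu_n \to \int g_\alpha\,d\mu$ uniformly in $\alpha$; this is proved via a standard $\epsilon/3$ argument after using Arzela-Ascoli to cover the closure of $\{g_\alpha\}$ in $C(K)$ by finitely many balls. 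The family $\{y \mapsto s_\varepsilon^{(k)}(x-y) : x \in \R\}$ is uniformly bounded by $\|s_\varepsilon^{(k)}\|_\infty$ and uniformly Lipschitz with constant $\|s_\varepsilon^{(k+1)}\|_\infty$, so this principle delivers uniform convergence of each $f_n^{(k)}$. Since the supports of the $f_n^{(k)}$ lie in a fixed compact set (the common support of the $\nu_n$ fattened by $\varepsilon$), this is $C^\infty$ convergence.

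For part (b), I would invoke the standard mollifier computation. Let $f$ denote the compactly supported $C^\infty$ density of $\nu$; then the density of $S_\varepsilon \nu$ is $s_\varepsilon * f$, with $(s_\varepsilon * f)^{(k)} = s_\varepsilon * f^{(k)}$. Using $\int s_\varepsilon = 1$,
\[
(s_\varepsilon * f^{(k)})(x) - f^{(k)}(x) = \int s_\varepsilon(z) \bigl(f^{(k)}(x-z) - f^{(k)}(x)\bigr)\,dz,
\]
and since $\supp s_\varepsilon \subset [-\varepsilon,\varepsilon]$ while $f^{(k)}$ is uniformly continuous on $\R$ (being continuous with compact support), the right-hand side tends to $0$ uniformly in $x$ as $\varepsilon \to 0$, for every $k$. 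This is the desired $C^\infty$ convergence.

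For part (c), I would combine (a) and (b) via a diagonal selection. Using (b), choose $\tilde\varepsilon_k \searrow 0$ with $\dist_{C^\infty}(S_{\tilde\varepsilon_k}\nu, \nu) < 1/(2k)$. Using (a) applied at the fixed smoothing parameter $\tilde\varepsilon_k$, choose $N_k$ so that $\dist_{C^\infty}(S_{\tilde\varepsilon_k}\nu_{n'}, S_{\tilde\varepsilon_k}\nu) < 1/(2k)$ for all $n' \ge N_k$, and arrange $N_1 < N_2 < \cdots$. Define $\varepsilon_n \defeq \tilde\varepsilon_k$ for $n \in [N_k, N_{k+1})$ (and arbitrarily for $n < N_1$). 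Then $\varepsilon_n \to 0$, and whenever $n \ge N_k$ and $n' \ge n$, we have $n' \ge N_k$, so the triangle inequality yields
\[
\dist_{C^\infty}(S_{\varepsilon_n}\nu_{n'}, \nu) \le \dist_{C^\infty}(S_{\tilde\varepsilon_k}\nu_{n'}, S_{\tilde\varepsilon_k}\nu) + \dist_{C^\infty}(S_{\tilde\varepsilon_k}\nu, \nu) < \tfrac{1}{k},
\]
which gives the required $\sup_{n' \ge n}$ estimate.
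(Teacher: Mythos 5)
Your proposal is correct and follows essentially the same route as the paper: differentiation under the integral plus weak-$*$ convergence for (a) (you merely flesh out the uniformity-in-$x$ step, which the paper dismisses as easy, via an equicontinuity argument), the standard mollifier estimate for (b), and the same diagonal choice of smoothing scales and thresholds for (c).
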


\begin{remark}\label{r.cinftyconv}
Denoting by $K$ a compact set that supports all measures in question, convergence in $C^\infty$ of the associated functions is then understood as the uniform convergence on $K$ of all derivatives. In other words, $f_n \to f$ in $C^\infty$ if and only if
$$
\mathrm{dist}_{C^\infty}(f_n,f) := \sum_{j = 0}^\infty 2^{-j} \min \{ \|f_n^{(j)} - f^{(j)}\|_{\infty,K} , 1 \} \to 0 \text{ as } n \to \infty.
$$
\end{remark}

\begin{proof}
(a) By assumption,
$$
\forall f \in C_b(\R) : \int f \, d\nu_n \to \int f \, d\nu \text{ as } n \to \infty.
$$

Moreover, as pointed out above, $S_\varepsilon \nu_n$ and $S_\varepsilon \nu$ are $C^\infty$ with density given by the convolution of $s_{\varepsilon}$ with the measure in question.

Thus it suffices to show that the densities converge in $C^\infty$. Combining the two ingredients, we observe for $k \in \Z_+$ that
\begin{align*}
\frac{d^k}{dx^k} \int s_\varepsilon(x-y) \, d\nu_n(y) & = \int s_\varepsilon^{(k)}(x-y) \, d\nu_n(y) \\
& \to \int s_\varepsilon^{(k)}(x-y) \, d\nu(y) \\
& = \frac{d^k}{dx^k} \int s_\varepsilon(x-y) \, d\nu(y)
\end{align*}
The convergence is easily seen to be uniform in $x$.

\medskip

(b) By assumption, there is a $C^\infty$ function $h$ such that
$\int g \, d\nu = \int g(x) h(x) \, dx$. This in turn gives that
$$
\int g \, d(S_\varepsilon \nu) = \iint g(x) s_\varepsilon(x-y) \, d\nu(y) \, dx = \int g(x) \left( \int s_\varepsilon(x-y) h(y) \, dy \right) dx,
$$
and hence the statement follows from the fact that $\int s_\varepsilon(x-y) h(y) \, dy$ converges to $h$ in $C^\infty$, which is a well-known mollifier fact.

\medskip

(c) Our goal is to find $\varepsilon_n \to 0$ such that $S_{\varepsilon_n} \nu_n \to \nu$ in $C^\infty$. 
Given $m \in \N$, we can choose $\varepsilon^{(m)} > 0$ small enough so that
$$
\mathrm{dist}_{C^\infty} ( S_{\varepsilon^{(m)}} \nu, \nu ) < \frac1m
$$
by part (b). Of course this choice can be made in such a way that $\varepsilon^{(m)} \to 0$ as $m \to \infty$. Moreover, by part (a), there is $n^{(m)} \in \N$ such that
$$
\sup_{n \ge n^{(m)}} \mathrm{dist}_{C^\infty} ( S_{\varepsilon^{(m)}} \nu_{n}, S_{\varepsilon^{(m)}} \nu ) < \frac1m.
$$
Again, this choice can be made in such a way that the sequence $\{ n^{(m)} \}$ is strictly increasing, $n^{(1)} < n^{(2)} < n^{(3)} < \cdots$, and hence diverging.

With these choices we now define
$$
\varepsilon_n := \varepsilon^{(m(n))}, \quad m(n) := \max \{ m : n^{(m)} \le n \}.
$$
Then, clearly $m(n) \to \infty$ as $n \to \infty$, and hence $\varepsilon_n \to 0$ as $n \to \infty$. Moreover,
\begin{align*}
\sup_{n' \ge n} \mathrm{dist}_{C^\infty} ( S_{\varepsilon_n} \nu_{n'}, \nu ) & \le \sup_{n' \ge n} \mathrm{dist}_{C^\infty} ( S_{\varepsilon_n} \nu_{n'}, S_{\varepsilon_n} \nu ) + \mathrm{dist}_{C^\infty} ( S_{\varepsilon_n} \nu , \nu ) \\
&  < \frac{1}{m(n)} + \frac{1}{m(n)},
\end{align*}
which goes to zero as $n \to \infty$ by the discussion above.
\end{proof}

\begin{proof}[Proof of Theorem~\ref{t.main2}]
Let $\tilde v \in C(\Omega,\R)$ and $\varepsilon > 0$ be given. Our goal is to find $v \in C(\Omega,\R)$ with 
\begin{equation}\label{e.mainproof1}
\|v - \tilde v\|_\infty < \varepsilon 
\end{equation}
for which the properties in (a)--(c) hold. 

The heart of the proof of deals with the density of states measure since the spectrum and the Lyapunov exponent can be directly derived from it (via the topological support formula \eqref{e.dossupport} and the Thouless formula \eqref{e.thouless}, respectively).

Let us emphasize that all measures below will have support inside a common compact set (e.g., the set $[-M,M]$, where $M = 2 + \|\tilde v\|_\infty + \varepsilon$). In particular, any lower bound on the length of a connected component of the topological support of any of these measures will yield an upper bound on the number of components in a uniform way.

We begin with a preparatory step. Choose $\varepsilon_n > 0$, $n \ge 0$, such that
\begin{equation}\label{e.mainproof9a}
\sum_{n = 0}^\infty \varepsilon_n < \varepsilon.
\end{equation}
These numbers will serve as allowable adjustments in our inductive construction below, and hence \eqref{e.mainproof9a} will ensure \eqref{e.mainproof1}. By \cite[Theorem~1]{AD05} (which applies since $\mu$ was assumed to be non-atomic) we may approximate $\tilde v$ with $v_0$ such that
\begin{equation}\label{e.mainproof2}
\|v - v_0\|_\infty < \varepsilon_0
\end{equation}
and 
\begin{equation}\label{e.mainproof3}
L_{v_0}(E) > 0 \text{ for Lebesgue almost every } E \in \R.
\end{equation}
Consider the smoothing $S_{\varepsilon^{(0)}} \nu_{v_0}$ for some 
\begin{equation}\label{e.mainproof9b}
0 < \varepsilon^{(0)} < \varepsilon_1.
\end{equation}
By Remark~\ref{r.smoothing}.(ii), $S_{\varepsilon^{(0)}} \nu_{v_0}$ is $C^\infty$ and by the way it is constructed, $\supp S_{\varepsilon^{(0)}} \nu_{v_0}$ consists of a finite union of compact intervals of length at least $2\varepsilon^{(0)}$. Moreover, by \eqref{e.mainproof3} we have\footnote{To derive this, recall that $L_{v_0}(E) \to \infty$ as $|E| \to \infty$.}
\begin{equation}\label{e.mainproof6}
L^{(0)} := \inf \{ S_{\varepsilon^{(0)}} L_{v_0}(E) : E \in \R \} > 0.
\end{equation}
Choose $\ell_n > 0$, $n \ge 1$, such that
\begin{equation}\label{e.mainproof6a}
L^{(0)} - \sum_{n = 1}^\infty \ell_n > 0.
\end{equation}
These numbers will also serve as allowable adjustments in our inductive construction below, and hence \eqref{e.mainproof6a} will guarantee the positivity of the Lyapunov exponent in the limit.

Next, for $n \ge 1$, we will inductively choose  smoothing sizes $0 < \varepsilon^{(n)} < \varepsilon_{n+1}$ and sampling functions $v_{n} \in C(\Omega,\R)$ 
with (see Remark~\ref{r.cinftyconv} for the definition of $\mathrm{dist}_{C^\infty}$)
\begin{equation}\label{e.mainproof11}
\|v_{n} - v_{n-1}\|_\infty < \varepsilon_n,
\end{equation}
\begin{equation}\label{e.mainproof11a}
I \text{ is a connected component of } \supp S_{\varepsilon^{(n)}} \nu_{v_n} \; \Rightarrow \; |I| \ge 2 \varepsilon^{(0)},
\end{equation}
\begin{equation}\label{e.mainproof11c}
\mathrm{dist}_{C^\infty}( S_{\varepsilon^{(n-1)}} \nu_{v_{n-1}}, S_{\varepsilon^{(n)}} \nu_{v_n}) < \varepsilon_n,
\end{equation}
\begin{equation}\label{e.mainproof11b}
\inf \{ S_{\varepsilon^{(n)}} L_{v_n}(E) : E \in \R \} \ge L^{(0)} - \sum_{k = 1}^n \ell_k.
\end{equation}

By \eqref{e.mainproof9a}, \eqref{e.mainproof2},  and \eqref{e.mainproof11}, it follows that the limit $v := \lim_{n \to \infty} v_n$ exists in $C(\Omega,\R)$ and obeys \eqref{e.mainproof1}. Moreover, as $0 < \varepsilon^{(n)} < \varepsilon_{n+1}$  and the $\varepsilon_n$'s are summable, it follows that the properties (a)--(c) in Theorem~\ref{t.main2} hold for $v$. Thus it remains to carry out the inductive procedure.

\medskip

Let $n \ge 1$ be given and assume that $\varepsilon^{(0)}, \ldots, \varepsilon^{(n-1)}$ and $v_0, \ldots, v_{n-1}$ have already been chosen with the desired properties. Let us realize $S_{\varepsilon^{(n-1)}} \nu_{v_{n-1}}$ as the density of states measure of a suitable (non-ergodic) family of Schr\"odinger operators, defined on the larger space
\begin{equation}\label{e.mainproof4}
\tilde \Omega := \Omega \times [-\varepsilon^{(n-1)},\varepsilon^{(n-1)}],
\end{equation}
equipped with the homeomorphism $\tilde T := T \times \mathrm{id}$, where $\mathrm{id}$ is the identity map on $[-\varepsilon^{(n-1)},\varepsilon^{(n-1)}]$, and the measure $\tilde \mu:= \mu \times s_{\varepsilon^{(n-1)}}$, with the absolutely continuous measure $s_{\varepsilon^{(n-1)}}$ on $[-\varepsilon^{(n-1)},\varepsilon^{(n-1)}]$ with density $s_{\varepsilon^{(n-1)}}$ from Definition~\ref{def.kernel} (we use the same symbol for the density and the measure). Of course $(\tilde \Omega, \tilde T, \tilde \mu)$ is not ergodic, but we can effectively view the operator family resulting via the sampling function
\begin{equation}\label{e.potentialshift}
\tilde v_{n-1} \left( \omega, s \right) = v_{n-1}(\omega) + s
\end{equation}
as a direct integral of ergodic operator families (obtained by shifting the original potentials by constants $s$ chosen from the interval $[-\varepsilon^{(n-1)},\varepsilon^{(n-1)}]$) with respect to the measure $s_{\varepsilon^{(n-1)}}$. Observe that, by this very description, the density of states measure of the enlarged family of operators is indeed given by $S_{\varepsilon^{(n-1)}} \nu_{v_{n-1}}$. Concretely, the potential shift \eqref{e.potentialshift} results in a translation of the spectral measure corresponding to $(H_\omega, \delta_0)$ by $s$, and hence the $\mu$ integration results in a shift of the density of states measure by $s$, after which the integration in the second variable produces the desired convolution.

Via the Thouless formula (or even directly since it is just an energy shift) we can see that the Lyapunov exponent of the enlarged family is given by $S_{\varepsilon^{(n-1)}} L_{v_{n-1}}$. In particular, it is smooth and uniformly positive.

Next we want to model the same behavior on the smaller space $\Omega$ and choose a sampling function $v_n \in C(\Omega,\R)$ whose density of states measure $\nu_{v_n}$ is very close to $S_{\varepsilon^{(n-1)}} \nu_{v_{n-1}}$. 

To this end, as $\mu$ was assumed to be non-atomic, by Kakutani–Rokhlin we may choose a tower of arbitrary height that exhausts $\Omega$ up to arbitrarily small measure. That is, for any positive integer $K$, there is a collection of mutually disjoint compact subsets $\Omega^{(K)}_1, \ldots, \Omega^{(K)}_K$ of $\Omega$ such that $T(\Omega^{(K)}_j) = \Omega^{(K)}_{j+1}$, $j = 1, \ldots, K-1$ and such that the measure of the complement of the tower,
$$
\mu \left( \Omega \setminus \bigcup_{j=1}^K \Omega^{(K)}_j \right), 
$$
approaches $0$ as $K \to \infty$.

We subdivide (or, more accurately, pack) $\Omega^{(K)}_1$ further into a large number of mutually disjoint compact subsets $\Omega^{(K,K')}_{1,1}, \ldots, \Omega^{(K,K')}_{1,K'}$ of equal measure, so that their union exhausts $\Omega^{(K)}_1$ up to arbitrarily small measure for $K'$ sufficiently large. Mapping forward with $T^{j-1}$, this provides a subdivision/packing of $\Omega^{(K)}_j$ by sets $\Omega^{(K,K')}_{j,1}, \ldots, \Omega^{(K,K')}_{j,K'}$ for each $j = 2,\ldots,K$. 

For $\ell = 1,\ldots, K'$ we choose values $s_\ell$ that follow from a suitable discretization of $([-\varepsilon^{(n-1)},\varepsilon^{(n-1)}],  s_{\varepsilon^{(n-1)}})$, that is, the values are chosen from a finite  relatively dense subset of $[-\varepsilon^{(n-1)},\varepsilon^{(n-1)}]$ with relative frequency respecting $s_{\varepsilon^{(n-1)}}$. We now define $v_n \in C(\Omega,\R)$ as follows: for $\omega \in \Omega^{(K,K')}_{j,\ell}$, we set $v_n(\omega) = v_{n-1}(\omega) + s_\ell$, and outside $\bigcup_{j,\ell} \Omega^{(K,K')}_{j,\ell}$, we use the Tietze extension theorem to define $v_n$ so that it is continuous on $\Omega$ and obeys
\begin{equation}\label{e.mainproof7}
\| v_n - v_{n-1} \|_\infty \le \varepsilon^{(n-1)} < \varepsilon_n.
\end{equation}

In this construction, $v_n$ still depends on $K,K'$, which are suitable large positive integers. Making this dependence explicit, denote the associated density of states measure by $\nu_{v_n^{(K,K')}}$ and  notice that the choices above can be arranged in such a way that $\nu_{v_n^{(K,K')}}$ converges to $S_{\varepsilon^{(n-1)}} \nu_{n-1}$ in the weak-$*$ sense as $K,K' \to \infty$. Applying Lemma~\ref{l.smoothinglemma}, we can therefore  choose first
$$
0 < \varepsilon^{(n)} < \varepsilon_{n+1}
$$ 
and then $K,K'$ sufficiently large such that for $v_n := v_n^{(K,K')}$, we have \eqref{e.mainproof11}--\eqref{e.mainproof11b}. Indeed, \eqref{e.mainproof11} holds by construction and by part (c) of Lemma~\ref{l.smoothinglemma}, taking $\varepsilon^{(n)}$ sufficiently small and $K,K'$ sufficiently large, we can ensure that \eqref{e.mainproof11c} and (via the Thouless formula) \eqref{e.mainproof11b} both hold. Finally, to see that \eqref{e.mainproof11a} holds, note that $\supp \nu_{v_n}$ becomes $\varepsilon^{(n)}$-dense in $\supp S_{\varepsilon^{(n-1)}} \nu_{v_{n-1}}$ if $K,K'$ are chosen large enough, so that each component of $\supp  S_{\varepsilon^{(n)}} \nu_{v_n}$ must contain a component of  $\supp S_{\varepsilon^{(n-1)}} \nu_{v_{n-1}}$. 

This establishes the desired properties at level $n$ and completes the discussion of the inductive procedure.
\end{proof}

\begin{proof}[Proof of Theorem~\ref{c.walters}]
Given a compact metric space $\Omega$, a homeomorphism $T : \Omega \to \Omega$, and a non-atomic ergodic Borel probability measure $\mu$, choose a $v \in C(\Omega,\R)$ from the dense set for which the spectral properties (a)--(d) listed in Theorem~\ref{t.main2} hold. Property~(c) implies that there is an $E$ (any $E \in \Sigma_v$ can be chosen) for which the cocycle $A^E$ is non-uniform, due to Johnson's theorem \cite{J86}.\footnote{See also Furman \cite{F97} and Lenz \cite{L02, L04} for related results.} 
\end{proof}

\begin{remark}
One may wonder whether the generic almost everywhere positivity result for the Lyapunov exponent established in \cite{AD05} is already sufficient for (or at least strongly indicative of) an affirmative answer to Walters' question. The issue is, however, that even if the Lyapunov exponent is almost everywhere positive, it does not follow that it must be positive somewhere on the spectrum -- the spectrum could have zero Lebesgue measure for the sampling functions in question. Indeed, this is precisely how \cite{DL06} establish zero-measure spectrum for Boshernitzan subshifts and locally constant sampling functions (which are dense in the continuous functions over such subshifts), namely by showing that the Boshernitzan condition is incompatible with the non-uniformity of such cocycles.
\end{remark}

\end{document}